\newtheorem{thm}{Theorem}[section]
\newtheorem{lem}[thm]{Lemma}
\newtheorem{prop}[thm]{Proposition}
\newtheorem{conj}[thm]{Conjecture}
\theoremstyle{definition}
\newtheorem*{xrem}{Remark}
\numberwithin{equation}{section}
\begin{document}


\baselineskip=17pt


\title{Je{\'s}manowicz' conjecture for polynomials}

\author{Jerome T. Dimabayao\\
Institute of Mathematics\\ 
University of the Philippines Diliman\\
C.P. Garcia St., U.P.Campus \\
Diliman, 1101 Quezon City, Philippines \\
Email: jdimabayao@math.upd.edu.ph}

\date{}

\maketitle


\renewcommand{\thefootnote}{}

\footnote{2010 \emph{Mathematics Subject Classification}: Primary 11D61; Secondary 11C08.}

\footnote{\emph{Key words and phrases}: Je{\'s}manowicz' conjecture, Diophantine equation, Polynomials.}

\renewcommand{\thefootnote}{\arabic{footnote}}
\setcounter{footnote}{0}

\begin{abstract}
Let $(a,b,c)$ be pairwise relatively prime integers 
such that $a^2 + b^2 = c^2$ . 
In 1956, Je{\'s}manowicz 
conjectured that the only 
solution of $a^x + b^y = c^z$ 
in positive integers is 
$(x,y,z)=(2,2,2)$. In this note we 
prove a polynomial analogue of this 
conjecture.  
\end{abstract}

\section{Introduction}

Let $(a,b,c)$ be a Pythagorean triple, so that 
$a^2 + b^2 = c^2$. It is clear that the 
Diophantine equation 
\begin{equation}\label{Pythagorean}
a^x + b^y = c^z 
\end{equation}
has the positive integer solution 
$(x,y,z)=(2,2,2)$. In 1955/56, 
Je{\'s}manowicz \cite{Jesmanowicz} 
formulated the following conjecture: 

\begin{conj}\label{Jesma}
Let $(a,b,c)$ be a Pythagorean triple. Then 
the only positive integer solution to 
equation (\ref{Pythagorean}) is $(x,y,z)=(2,2,2)$. 
\end{conj}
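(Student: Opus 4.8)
The plan is to combine the explicit parametrization of primitive Pythagorean triples with $2$-adic congruence restrictions and transcendence-theoretic size bounds. Since $(a,b,c)$ is pairwise relatively prime, exactly one of $a,b$ is even; swapping $a$ and $b$ if necessary, assume $b$ is even. Then there exist coprime integers $m>n\ge 1$ of opposite parity with
\begin{equation}
a=m^{2}-n^{2},\qquad b=2mn,\qquad c=m^{2}+n^{2}.
\end{equation}
Because one of $m,n$ is even we have $4\mid b$, while $c\equiv 1\pmod 4$ and $a\equiv\pm 1\pmod 4$. After recording the trivial solution $(2,2,2)$, I would suppose toward a contradiction that some solution $(x,y,z)\ne(2,2,2)$ exists.

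The first substantive step is to squeeze out parity and valuation constraints. Reducing $a^{x}+b^{y}=c^{z}$ modulo $4$ gives $a^{x}\equiv c^{z}\equiv 1\pmod 4$ for every $y\ge 1$ (since $4\mid b^{y}$), which already forces $x$ to be even when $a\equiv 3\pmod 4$; pushing the same idea to higher powers of $2$ relates $v_{2}(c^{z}-a^{x})$ to $y\,v_{2}(b)$ and constrains $z$. In parallel I would write the equation as $c^{z}-a^{x}=(2mn)^{y}$ and evaluate Jacobi symbols modulo the odd prime divisors of $m$ and of $n$: since the right-hand side is, up to a power of $2$, a perfect $y$-th power supported on $m$ and $n$, quadratic reciprocity forces $c^{z}$ and $a^{x}$ to be squares modulo these primes, which in many residue classes of $(m,n)$ pins $x$ and $z$ to even values. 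Once $x,z$ are even, $c^{z}-a^{x}$ factors as $(c^{z/2}-a^{x/2})(c^{z/2}+a^{x/2})$, and matching this factorization against $(2mn)^{y}$ collapses a broad family of triples directly to $(x,y,z)=(2,2,2)$.

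To handle the remaining cases I would bound the exponents unconditionally. Comparing the two largest of the three powers $a^{x},b^{y},c^{z}$ produces a linear form in two of the logarithms $\log a,\log b,\log c$ that is forced to be very small relative to $\max(x,y,z)$; Baker-type lower bounds for such forms then yield an effective absolute upper bound for $\max(x,y,z)$ in terms of $a,b,c$. In principle a finite verification eliminates the finitely many surviving exponent vectors for each triple, completing the argument.

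The main obstacle I anticipate, and the reason the statement is posed as a conjecture, is \emph{uniformity across all primitive triples}. The congruence and reciprocity constraints are decisive only for parametrizing pairs $(m,n)$ lying in favorable residue classes, and the linear-forms bound, while effective, is astronomically larger than any feasible search range and does not by itself rule out $z\ne 2$ in every class. The crux is therefore to show that the $2$-adic and reciprocity conditions --- which hold for \emph{every} primitive triple --- are already logically incompatible with any exponent vector other than $(2,2,2)$, thereby removing the dependence on transcendence bounds; establishing this incompatibility in complete generality is where the real difficulty lies.
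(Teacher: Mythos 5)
You have attempted to prove a statement that the paper does not prove and that nobody has proved: the statement in question is Conjecture \ref{Jesma}, Je{\'s}manowicz' original 1956 conjecture for integer Pythagorean triples, which remains open. The paper states it only as motivation and then proves a polynomial analogue (Theorem \ref{Jesma_poly}) by entirely different means --- the Mason--Stothers $abc$-type inequality for polynomials, which has no unconditional integer counterpart. So there is no ``paper's own proof'' to compare against, and your proposal, as you yourself concede in its final paragraph, is a program rather than a proof.

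The concrete gaps are the ones you half-acknowledge. First, the congruence and Jacobi-symbol arguments built on the parametrization $a=m^2-n^2$, $b=2mn$, $c=m^2+n^2$ decide the exponents only when $(m,n)$ lies in favorable residue classes; this is precisely the state of the partial results the paper cites (Dem'janenko, Le, Fujita--Miyazaki, Terai, and others), each of which handles special families such as $n=1$, $b \equiv 0 \pmod 4$ with extra conditions, or Fermat-number triples. Second, the Baker-type bound on linear forms in logarithms gives an effective upper bound on $\max(x,y,z)$ \emph{depending on the triple} $(a,b,c)$, so the ``finite verification'' step is a finite computation per triple, not a finite computation overall: it can never settle the conjecture for the infinitely many primitive triples at once. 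Your closing sentence --- that one must show the $2$-adic and reciprocity conditions are incompatible with any exponent vector other than $(2,2,2)$ uniformly in $(m,n)$ --- names exactly the missing idea, and supplying it is the open problem itself. If your goal was to engage with what this paper actually establishes, the productive target is Theorem \ref{Jesma_poly}, where the Mason--Stothers theorem substitutes for the unavailable uniform bounds and makes the analogous statement provable.
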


Many special cases of Conjecture (\ref{Jesma}) 
have been settled for primitive Pythagorean 
triples (cf.\ e.g.\ \cite{Demjanenko, Ko1, Ko2, Le, Lu, Pod, Sierpinski}) and 
recent years saw increased activity towards 
the resolution of Conjecture (\ref{Jesma}) 
\cite{Deng-Huang,  Fuj-Miya, Ma-Chen, Miyazaki1, MYW, Tang-Weng, Tang-Yang, Terai}.  

In this note, we provide an  
analogue of Conjecture \ref{Jesma} for 
polynomials over a field $K$ of characteristic $0$. 
It is known (c.f.\ e.g.\ \cite{Ku}) that 
a triple $(a,b,c)$ of polynomials 
over such fields $K$ satisfies $a^2 + b^2 = c^2$ 
if and only if 
\[a = w(f^2-g^2), \quad b = 2wfg, \quad 
\mbox{ and } c = w(f^2 + g^2) \] 
or 
\[a = 2wfg, \quad b = w(f^2-g^2), \quad 
\mbox{ and } c = w(f^2 + g^2), \] 
where $w,f,g$ are polynomials in $K[t]$. 
If $f$ and $g$ are relatively prime 
polynomials in $K[t]$ then we call 
the triple $(f^2-g^2, 2fg, f^2 + g^2)$ 
a \emph{primitive Pythagorean triple}.
Note that if any one of the polynomials 
$f^2-g^2$, $fg$ or $f^2 + g^2$ are constant 
polynomials, then $f$ and $g$ are both constants. 
 
It is clear that the polynomial 
Diophantine equation 
\begin{equation}\label{Pythagorean poly}
(w(f^2-g^2))^x + (w(2fg))^y = (w(f^2 + g^2))^z 
\end{equation}
has the positive integer solution 
$(x,y,z)=(2,2,2)$.
We are interested in the determination of 
the complete set of solutions $(x,y,z)$ 
in positive integers of the equation above. 
We prove the following result.

\begin{thm}\label{Jesma_poly}
Let $K$ be a field with characteristic $0$. 
Let $(f^2-g^2,2fg,f^2 + g^2)$ be a primitive 
Pythagorean triple where $f$ and $g$ are 
nonconstant relatively prime polynomials 
over $K$. 
Suppose $w$ is a nonzero polynomial.
Then the only positive integer solutions to 
equation (\ref{Pythagorean poly}) are 
$(x,y,z)=(2,2,2)$ and $(x,y,z)=(2,1,1)$.
The latter occurs if and only if 
$w$ is constant, $\sqrt{w} \in K$ and $f = -g \pm 1/\sqrt{w}$. 
\end{thm}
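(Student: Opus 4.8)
The plan is to combine pairwise coprimality, a degree and leading-coefficient analysis, and the polynomial $abc$ theorem (Mason--Stothers) to bound the exponents, and then to finish by direct substitution using the factorization identities of the Pythagorean triple. First I would record the algebraic skeleton. Writing $a = f^2-g^2$, $b = 2fg$, $c = f^2+g^2$, the hypothesis $\gcd(f,g)=1$ together with $\operatorname{char} K = 0$ makes $a,b,c$ pairwise coprime: a common factor of $b$ with either $a$ or $c$ would divide $\gcd(f^2,g^2)$, while a common factor of $a$ and $c$ divides $2\gcd(f^2,g^2)$. I would also record the identities $a^2+b^2=c^2$, $c-b=(f-g)^2$, $c+b=(f+g)^2$, $c-a=2g^2$, $c+a=2f^2$, and $a=(f-g)(f+g)$; these are what convert exponent information into statements about $f$ and $g$. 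Since $f,g$ are nonconstant and coprime, $f\neq g$ and $a,b,c$ are all nonconstant.

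Next I would bound $(x,y,z)$. Dividing (\ref{Pythagorean poly}) by the largest common power of $w$ and applying Mason--Stothers to the resulting relation $A+B=C$, the relevant radical is $\operatorname{rad}(w\,abc)$, whose degree is at most $\deg w + \deg a+\deg b+\deg c$. Comparing this with $x\deg(wa)$, $y\deg(wb)$, $z\deg(wc)$ and using the degree relations forced by the triple (if $\deg f\neq\deg g$ then $\deg a=\deg c>\deg b$; if $\deg f=\deg g$ then $\deg b$ ties with the larger of $\deg a,\deg c$, and at most one of $a,c$ can drop in degree) confines $x,y,z$ to an explicit finite range with $x$ and $z$ small. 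This reduces the problem to finitely many candidate triples.

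For each surviving candidate I would run a leading-coefficient comparison, separating $\deg f\neq\deg g$ (where $a$ and $c$ have leading coefficients equal up to sign, both of absolute value $(\operatorname{lc} f)^2$ or $(\operatorname{lc} g)^2$) from $\deg f=\deg g$. In the first case the top-degree balance forces $x=z$ unless the $b$-term leads, a possibility eliminated by comparing next-order terms; substituting $x=z$ into $w^x(c^x-a^x)=w^y b^y$ and matching the factor $f^yg^y$ of $b^y$ against the coprime factors of $c^x-a^x$ forces $x=z=2$ and then $y=2$, leaving only $(2,2,2)$. The degenerate family lives in the case $\deg f=\deg g$ with $f+g$ constant, where $\deg a$ drops; here I would test $(x,y,z)=(2,1,1)$ by direct substitution. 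Using $c-b=(f-g)^2$ and $a=(f-g)(f+g)$, the relation $w^2a^2+wb=wc$ collapses to $w^2(f+g)^2=w$, that is $w(f+g)^2=1$, which forces $w$ to be a nonzero constant, $(f+g)^2=1/w$ to be a square in $K$, and $f=-g\pm 1/\sqrt w$; conversely these conditions produce the solution. To rule out nonconstant $w$ outside $(2,2,2)$, I would examine the valuation $v_\pi$ at an irreducible $\pi\mid w$: in $A+B=C$ the two smallest of $v_\pi(A),v_\pi(B),v_\pi(C)$ coincide, and for $\pi\nmid abc$ this forces the minimum of $x,y,z$ to be attained twice, which together with the degree analysis leaves only $(2,2,2)$.

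I expect the main obstacle to be the case $\deg f=\deg g$. There the degree of $b$ coincides with that of $a$ or of $c$, either $a$ or $c$ may have degree strictly smaller than $2\deg f$, and the leading terms of the two summands on the left of (\ref{Pythagorean poly}) can cancel, so the naive degree comparison no longer isolates the exponents. Showing that these cancellations cannot manufacture solutions beyond $(2,2,2)$ and the stated $(2,1,1)$ family, while simultaneously keeping track of the primes that $w$ shares with $a,b,c$, is where the genuine work lies.
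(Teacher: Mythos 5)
Your overall strategy (Mason--Stothers plus a case split on the leading terms of $f^2$ and $g^2$, plus direct algebraic elimination of small exponent patterns) is the same as the paper's, and your treatment of the $(2,1,1)$ family via $c-b=(f-g)^2$, $a=(f-g)(f+g)$ and $w(f+g)^2=1$ is essentially the paper's Proposition \ref{two are one}(iii). But the nonconstant-$w$ case, which is where most of the paper's work lies, has a genuine gap in your plan. Your valuation argument assumes there is an irreducible $\pi\mid w$ with $\pi\nmid abc$. That hypothesis can fail: for instance, if $x<y\leq z$ the equation gives $A^x=w^{y-x}(w^{z-y}C^z-B^y)$, so \emph{every} zero of $w$ is a zero of $A$, and there may be no such $\pi$ at all. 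Even where the argument applies, ``the minimum of $x,y,z$ is attained twice'' does not exclude patterns such as $(1,1,r)$ or $(2,2,r)$ with $r>2$, so it cannot by itself reduce to $(2,2,2)$.

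The second, related gap is quantitative. When $w$ is nonconstant the three terms $(wa)^x,(wb)^y,(wc)^z$ are not coprime, and after you strip off the shared power of $w$ the Mason--Stothers radical acquires an extra contribution from $w$; the resulting bound on the smallest exponent is of the form $x\delta(A)<\delta(A)+\delta(B)+\delta(C)+\min\{y,z\}\,\delta(w)-1$ (the paper's Proposition \ref{non-coprime1}), which only forces that exponent to be at most $3$, not $2$. Ruling out the surviving exponent patterns with a $3$ in them --- concretely, showing $wA+(wB)^2\neq(wC)^3$ and $(wA)^3+wB\neq(wC)^2$ in the relevant degree configurations --- is the content of the paper's Lemmas \ref{1-2-3} and \ref{3-1-2}, whose proofs hinge on a nontrivial argument that a certain gcd $d$ of $wA-1$ and $w(1-wC)$ is a perfect square. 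Nothing in your proposal supplies a mechanism for this step; indeed your closing paragraph correctly identifies that the cancellation cases are ``where the genuine work lies'' but does not carry it out. A smaller issue: your claim that the degree analysis keeps ``$x$ and $z$ small'' is only the case $\mathrm{LT}(f^2)\neq\pm\mathrm{LT}(g^2)$; in the other two cases it is the pairs $(x,y)$ or $(y,z)$ that get bounded, and you then still need to exclude $(1,r,1)$ and $(1,1,r)$ separately, as the paper does in Proposition \ref{two are one}(i) and (iv).
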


By embedding $K$ in an algebraic closure, 
we may assume without loss of generality 
that the field $K$ is algebraically closed. 
We follow this assumption throughout the paper. 

\begin{xrem} 
Note that there are obstructions in the case 
where $K$ has positive characteristic. 
For instance, if $K$ has characteristic $2$ 
then $A^2 + B^2 = (A+B)^2$, for any 
polynomial $A,B \in K[t]$. Then 
the Diophantine equation $A^x + B^y = (A+B)^z$ 
has infinitely many solutions $(x,y,z)$ given 
by $x=y=z=2^m$ where $m$ is a nonnegative 
integer.
\end{xrem}

Let $(A,B,C)=(f^2-g^2,2fg, f^2+g^2)$ be a primitive 
Pythagorean triple in $K[t]$.
Let $(x,y,z)$ be a solution in positive 
integers to equation (\ref{Pythagorean poly}). 
We first record the following observation.

\begin{prop}\label{two are two}
If $x=y=2$ or $x=z=2$ or $y=z=2$, then we obtain 
the trivial solution $(x,y,z)=(2,2,2)$. 
\end{prop}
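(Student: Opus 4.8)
The plan is to handle the three cases uniformly, exploiting the fact that two of the three summands already carry the exponent $2$ and can therefore be collapsed using the Pythagorean identity. Writing $A = f^2-g^2$, $B = 2fg$, $C = f^2+g^2$, we have $A^2 + B^2 = C^2$ and hence, after multiplying through by $w^2$,
\[
(wA)^2 + (wB)^2 = (wC)^2 = w^2 C^2 .
\]
In each case I would substitute this relation into equation~(\ref{Pythagorean poly}) so as to cancel the two terms that already have exponent $2$, leaving a single equation of the form $(wP)^n = (wP)^2$ for one of $P \in \{A, B, C\}$ and the remaining exponent $n$; a comparison of degrees will then force $n = 2$.

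First I would treat the case $x=y=2$. Here equation~(\ref{Pythagorean poly}) reads $(wA)^2 + (wB)^y = (wA)^2 + (wB)^2 = (wC)^z$, and by the displayed identity the left side equals $(wC)^2$. Thus $(wC)^z = (wC)^2$ in the integral domain $K[t]$. Since $C = f^2+g^2$ is nonconstant — precisely the observation recorded in the introduction, that no member of the triple is constant once $f,g$ are nonconstant — the polynomial $wC$ is nonconstant, so $\deg(wC) \geq 1$; comparing degrees in $(wC)^z = (wC)^2$ gives $z\deg(wC) = 2\deg(wC)$ and therefore $z = 2$.

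The remaining two cases are entirely parallel. For $x = z = 2$, rewriting the right-hand side of $(wA)^2 + (wB)^y = (wC)^2$ as $(wA)^2 + w^2 B^2$ and cancelling the $(wA)^2$ terms leaves $(wB)^y = (wB)^2$; since $B = 2fg$ is nonconstant, the same degree comparison forces $y = 2$. For $y = z = 2$, the identical manipulation applied to $(wA)^x + (wB)^2 = (wC)^2$ yields $(wA)^x = (wA)^2$, and the nonconstancy of $A = f^2-g^2$ gives $x = 2$. In all three cases we recover $(x,y,z) = (2,2,2)$.

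The argument is short, and the only point genuinely requiring care is the nonconstancy of $A$, $B$, and $C$, which is supplied by the observation in the introduction; I do not anticipate any serious obstacle. I note only that phrasing the final step as the degree identity $n\deg(wP) = 2\deg(wP)$, rather than as $(wP)^{n-2}=1$, has the minor advantage of also excluding the degenerate small exponent $n=1$ without a separate argument.
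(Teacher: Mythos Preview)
Your argument is correct and is essentially a spelled-out version of the paper's one-line proof, which simply says the result ``follows immediately from unique factorization on $K[t]$.'' In both cases one substitutes the Pythagorean identity $(wA)^2+(wB)^2=(wC)^2$ to reduce to an equation $(wP)^n=(wP)^2$ for the remaining exponent, and then uses either unique factorization or the equivalent degree count (your version) together with the nonconstancy of $A$, $B$, $C$ to force $n=2$.
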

\begin{proof}
This follows immediately 
from unique factorization on $K[t]$. 
\end{proof}

For a polynomial $p(t) \in K[t]$ 
we let $\delta(p)$ and $\eta(p)$ denote 
the degree and the number of distinct roots 
of $p$, respectively. We prove the claimed result 
by determining bounds on $x$, $y$, and $z$. 
The following result (see for instance \cite{Snyder})
will help us achieve our aim.

\begin{thm}{\rm (Mason-Stothers)}\label{Mason}
Let $a(t)$, $b(t)$ and $c(t)$ be polynomials 
whose coefficients belong to an algebraically 
closed field $K$ with characteristic $0$. 
Suppose $a(t)$, $b(t)$ and $c(t)$ are 
not all constant, relatively prime and 
that $a(t) + b(t) = c(t)$. 
Then 
\[ \max \{ \delta(a), \delta(b), \delta(c)\} 
\leq \eta(abc) - 1. \]
\end{thm}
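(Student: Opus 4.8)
The plan is to give the classical Wronskian argument. Set $W = a b' - a' b$, the Wronskian of $a$ and $b$, where the prime denotes the formal derivative. Before anything else I would reduce to the case that $a$, $b$, $c$ are \emph{pairwise} relatively prime: if an irreducible $p$ divided two of them, then by $a+b=c$ it would divide the third, contradicting relative primality of the triple. The first substantive step is to observe that $W \neq 0$. Indeed, if $W = 0$ then $(a/b)' = 0$, so $a/b$ is constant, which is incompatible with $\gcd(a,b)=1$ unless $a$ and $b$ are both constant (whence $c$ is too, excluded by hypothesis). I expect this nonvanishing to be the one genuinely delicate point, and it is exactly where characteristic $0$ is essential: in characteristic $p$ a nonconstant polynomial such as $t^p$ has vanishing derivative, and the conclusion can fail.

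Next I would record two easy consequences of $a+b=c$. Differentiating gives $a'+b'=c'$, and a direct computation shows that the three pairwise Wronskians satisfy
\[
W = a b' - a' b = a c' - a' c = -(b c' - b' c),
\]
so in particular $\delta(W) \le \delta(a)+\delta(b)-1$, and likewise $\delta(W)\le \delta(a)+\delta(c)-1$ and $\delta(W)\le \delta(b)+\delta(c)-1$, since each expression $p q' - p' q$ has degree at most $\delta(p)+\delta(q)-1$.

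The heart of the proof is a divisibility claim. For a polynomial $p$, let $\operatorname{rad}(p)$ denote the product of its distinct monic linear factors, so that $\delta(p/\operatorname{rad}(p)) = \delta(p) - \eta(p)$. If $\alpha$ is a root of $a$ of multiplicity $e$, then $a b'$ vanishes to order at least $e$ at $\alpha$ while $a' b$ vanishes to order exactly $e-1$ there (using $b(\alpha)\neq 0$, by coprimality); hence $W$ vanishes to order exactly $e-1$ at $\alpha$, and summing over the roots of $a$ shows $a/\operatorname{rad}(a) \mid W$. The Wronskian coincidences above give the same divisibility for $b$ and $c$. As $a$, $b$, $c$ are pairwise coprime, the three quotients $a/\operatorname{rad}(a)$, $b/\operatorname{rad}(b)$, $c/\operatorname{rad}(c)$ are pairwise coprime, so their product divides $W$.

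Finally I would compare degrees. The product just described has degree $\delta(abc)-\eta(abc)$, where I use $\eta(a)+\eta(b)+\eta(c)=\eta(abc)$ (again from pairwise coprimality). Since this product divides the nonzero polynomial $W$, we obtain $\delta(abc)-\eta(abc) \le \delta(W)$. Combining with $\delta(W)\le \delta(a)+\delta(b)-1$ and cancelling $\delta(a)+\delta(b)$ gives $\delta(c) \le \eta(abc)-1$; the two other degree bounds for $W$ yield $\delta(a)\le \eta(abc)-1$ and $\delta(b)\le \eta(abc)-1$. Taking the maximum over the three inequalities finishes the proof.
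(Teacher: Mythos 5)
Your proof is correct. Note, however, that the paper does not supply its own proof of this theorem: it is quoted as a known result, with the reference \cite{Snyder} standing in for a proof. Your Wronskian argument is the classical one and is in substance the same as the cited proof of Snyder, who works with $\gcd(f,f')$ where you use $f/\operatorname{rad}(f)$; over an algebraically closed field of characteristic $0$ these coincide, and the skeleton is identical -- reduction to pairwise coprimality via $a+b=c$, nonvanishing of $W=ab'-a'b$ (correctly identified as the step where characteristic $0$ is essential, with $t^p$ the right counterexample in characteristic $p$), the identities $ab'-a'b=ac'-a'c=-(bc'-b'c)$, divisibility of $W$ by the pairwise coprime deficiency factors, and the final degree count. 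All steps check out; the only cosmetic caveat is that the statement implicitly assumes $a$, $b$, $c$ are nonzero (otherwise $\eta(abc)$ is meaningless), which your argument tacitly uses when forming $a/b$, so a one-line remark to that effect would make it airtight.
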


We split the proof of our main result into two 
parts. In section \ref{Coprime}, we prove 
Theorem \ref{Jesma_poly} in the case where 
$w$ is constant. The proof for the general 
case is given in section \ref{Noncoprime}.

\section{The primitive case}\label{Coprime}

Put $A = f^2 - g^2$, $B= 2fg$ and $C= f^2 + g^2$.
Without loss of generality, assume that 
$\delta(f) \geq \delta(g)$. 
Then note that 
$\delta(A), \delta(B),\delta(C) \leq 2 \delta(f)$. 
Assume for the moment that $w$ is a nonzero constant polynomial.
Suppose equation (\ref{Pythagorean poly}) holds 
with positive integers $x, y$, and $z$. 

Applying Theorem \ref{Mason} to 
\[ a = (wA)^x \quad 
b = (wB)^y \quad c = (wC)^z, \] 
we have 
\begin{align}
x \delta(A) &\leq
\delta(A) + \delta(B) + \delta(C) -1, \label{x-bound} \\
y \delta(B) &\leq
\delta(A) + \delta(B) + \delta(C) -1, \label{y-bound} \\ 
z \delta(C) &\leq
\delta(A) + \delta(B) + \delta(C) -1. \label{z-bound}  
\end{align}

For a polynomial $p(t) \in K[t]$, 
let $\mathrm{LT}(p)$ denote the leading term of 
$p$. We consider three cases depending on the 
relation of $\mathrm{LT}(f^2)$ with $\mathrm{LT}(g^2)$.

\begin{prop}\label{bounds} 
Let $(A,B,C)=(f^2-g^2, 2fg, f^2 + g^2)$ be a 
primitive Pythagorean triple and $w$ be a nonzero constant.
Let $(x,y,z)$ be a positive integer 
solution to equation (\ref{Pythagorean poly}). 
\begin{enumerate}
\item[(i)] If $\mathrm{LT}(f^2) \neq \pm \mathrm{LT}(g^2)$ 
then $x,z \leq 2$. 
\item[(ii)] If $\mathrm{LT}(f^2) = - \mathrm{LT}(g^2)$ 
then $x,y \leq 2$.
\item[(iii)] If $\mathrm{LT}(f^2) = \mathrm{LT}(g^2)$ 
then $y,z \leq 2$.
\end{enumerate}

\end{prop}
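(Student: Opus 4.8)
The plan is to treat the three cases uniformly: in each one I would pin down the exact values of $\delta(A)$, $\delta(B)$, $\delta(C)$ and then substitute them into the Mason--Stothers inequalities (\ref{x-bound})--(\ref{z-bound}). Write $m=\delta(f)$ and $n=\delta(g)$, so $m\ge n$ by the standing assumption, and let $c_f,c_g$ be the leading coefficients of $f,g$; then $\delta(f^2)=2m$, $\delta(g^2)=2n$, $\delta(fg)=m+n$, and $\mathrm{LT}(f^2)=c_f^2t^{2m}$, $\mathrm{LT}(g^2)=c_g^2t^{2n}$. The three hypotheses are exhaustive and mutually exclusive, and the entire argument comes down to tracking which of $A=f^2-g^2$ and $C=f^2+g^2$ loses its top term to cancellation. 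Throughout I would write $S=\delta(A)+\delta(B)+\delta(C)$, so that the bounds read $x\,\delta(A)\le S-1$, $y\,\delta(B)\le S-1$, $z\,\delta(C)\le S-1$.

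For (i) no cancellation occurs in either $A$ or $C$, so $\delta(A)=\delta(C)=2m$ and $\delta(B)=m+n$; hence $S-1=5m+n-1\le 6m-1<6m$ because $n\le m$. Then $2mx=x\,\delta(A)\le S-1<6m$ forces $x\le 2$, and likewise $z\le 2$ from the bound on $\delta(C)$. Cases (ii) and (iii) are mirror images. In (ii) the relation $\mathrm{LT}(f^2)=-\mathrm{LT}(g^2)$ can only hold when the two degrees agree, i.e.\ $m=n$; then the leading terms of $C$ cancel while those of $A$ survive, giving $\delta(A)=2m$, $\delta(B)=m+n=2m$, and $\delta(C)\le 2m-1$. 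Thus $S-1\le 6m-2<6m$, and the first two inequalities yield $x,y\le 2$. Case (iii) is the same computation with $A$ and $C$ swapped: $m=n$, $\delta(A)\le 2m-1$, $\delta(B)=\delta(C)=2m$, so $S-1<6m$ and the last two inequalities give $y,z\le 2$.

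The few points to verify carefully are all in (ii) and (iii). First, one must confirm that $\mathrm{LT}(f^2)=\pm\mathrm{LT}(g^2)$ genuinely forces $m=n$, since if $m>n$ the two leading monomials would have different degrees; this equality is exactly what lifts $\delta(B)$ to $2m$ and lets the inequality close. Second, the polynomial whose top term cancels must be nonzero so that its degree is a well-defined integer $\le 2m-1$; here I would invoke the remark from the introduction that nonconstant coprime $f,g$ force $A,B,C$ to be nonconstant, in particular $A\ne 0$ and $C\ne 0$. I expect no genuinely hard step: once the cancellation pattern in each case is identified, everything reduces to a single uniform degree count against the Mason--Stothers bound.
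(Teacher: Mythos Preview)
Your proposal is correct and follows essentially the same route as the paper: in each case you identify which of $A,C$ suffers leading-term cancellation, read off $\delta(A),\delta(B),\delta(C)$, and feed these into the Mason--Stothers bounds (\ref{x-bound})--(\ref{z-bound}) to force the relevant exponents below $3$. The only differences are cosmetic---your explicit computation of $S-1$ versus the paper's direct bound $\delta(A)+\delta(B)+\delta(C)\le 3\delta(A)$ (etc.)---and your added care in noting that $A,C\neq 0$ so their degrees are well-defined after cancellation.
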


\begin{proof} 
We prove this by considering 
each case separately.

\underline{Case 1:} Suppose $\mathrm{LT}(f^2) \neq \pm \mathrm{LT}(g^2)$.
Since $\delta(f) \geq \delta(g)$, we have  
$\delta(A)=\delta(C)=2 \delta(f)$ and 
$\delta(B) = \delta(f) + \delta(g) \leq 2 \delta(f) = \delta(A)$.
Then relations (\ref{x-bound}) and 
(\ref{z-bound}) become 
\begin{align*}
x \delta(A) &\leq 3\delta(A) -1,  \\
z \delta(C) &\leq 3 \delta(C) -1.   
\end{align*}
Thus, $x,z \leq 2$.

\underline{Case 2:} 
Suppose $\mathrm{LT}(f^2) = - \mathrm{LT}(g^2)$. 
Then $\delta(f) = \delta(g)$, 
$\mathrm{LT}(f^2-g^2) = 2\mathrm{LT}(f^2)$. 
Thus we have
\[ 
\delta(C) < 2 \delta(f)
\quad \mbox{ and } \quad 
\delta(A) = 2\delta(f) = \delta(B).
\]  
So relations (\ref{x-bound}) and (\ref{y-bound}) 
give
\begin{align*}
x \delta(A) &< 3 \delta(A) - 1, \\
y \delta(B) &< 3 \delta(B) - 1. 
\end{align*}
Hence $x,y \leq 2$.

\underline{Case 3:} 
Suppose $\mathrm{LT}(f^2) = \mathrm{LT}(g^2)$. 
Then $\delta(f) = \delta(g)$, 
$\mathrm{LT}(f^2 + g^2) = 2\mathrm{LT}(f^2) \neq 0$. 
Thus we have
\[ \delta(A)< 2\delta(f), \quad \mbox{ and } \quad
\delta(B) =2\delta(f) = \delta(C), 
\] 
So relations (\ref{y-bound}) and (\ref{z-bound}) 
give
\begin{align*}
y \delta(B) &< 3 \delta(B) - 1, \\
z \delta(C) &< 3 \delta(C) - 1. 
\end{align*}
Hence $y,z \leq 2$.

\end{proof}

\begin{prop}\label{two are one} 
Let $(A,B,C)=(f^2-g^2,2fg, f^2+g^2)$ be a primitive 
Pythagorean triple in $K[t]$. 
Assume that $w$ is a nonzero polynomial. Then 
\begin{enumerate}
\item[(i)] $(x,y,z)=(1,r,1)$ 
is not a solution to equation (\ref{Pythagorean poly}) 
for any $r \in \mathbb{N}$;
\item[(ii)] $(x,y,z)=(r,1,1)$ 
is not a solution to equation (\ref{Pythagorean poly}) 
for any natural number $r \geq 3$;
\item[(iii)] $(x,y,z)=(2,1,1)$ 
is a solution to equation (\ref{Pythagorean poly}) 
if and only if $w$ is constant and 
$f + g = \pm 1/\sqrt{w}$.
\item[(iv)] $(x,y,z)=(1,1,r)$ 
is not a solution to equation (\ref{Pythagorean poly}) 
for any $r \in \mathbb{N}$;
\end{enumerate}
\end{prop}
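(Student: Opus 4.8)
The plan is to dispose of each of the four parts by substituting the indicated exponent triple into equation (\ref{Pythagorean poly}) and then using the algebraic relations among $A=f^2-g^2$, $B=2fg$ and $C=f^2+g^2$ to collapse the equation into a comparison between a perfect power and a polynomial of controlled shape. The identities I will lean on are $C-A=2g^2$, $A+B=f^2+2fg-g^2$, $C-B=(f-g)^2$ and $A=(f-g)(f+g)$, together with the coprimality of $f$ and $g$ and its consequences (for instance that $g$ vanishes at no root of $f$ or of $C$, and that $f-g$ and $f+g$ are coprime). Since $w$ is an arbitrary nonzero polynomial, the Mason--Stothers bounds of Proposition \ref{bounds} are not available; instead the analysis rests on comparing, at a single well-chosen root, the exact order with which that root divides each side. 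Write $v_\alpha(p)$ for the order of vanishing of $p$ at $\alpha$.

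For part (i), putting $(x,y,z)=(1,r,1)$ and subtracting $wA$ using $C-A=2g^2$ reduces the equation to $(2wfg)^r=2wg^2$. I then choose any root $\alpha$ of $f$, which exists because $f$ is nonconstant, and note $g(\alpha)\neq 0$ by coprimality; comparing orders at $\alpha$ gives $r\big(v_\alpha(w)+v_\alpha(f)\big)=v_\alpha(w)$, which is impossible since $v_\alpha(f)\geq 1$ and $r\geq 1$ force the left side to exceed the right. Part (iv) is handled identically in spirit: putting $(x,y,z)=(1,1,r)$ turns the equation into $w(f^2+2fg-g^2)=w^rC^r$, and the identity $f^2+2fg-g^2=C+2g(f-g)$ shows this first factor vanishes at no root $\beta$ of $C$ (here $g(\beta)\neq 0$ and, were $f(\beta)=g(\beta)$, one would get $g(\beta)=0$). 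Comparing orders at such a $\beta$ yields $(1-r)v_\beta(w)=r\,v_\beta(C)$, whose right side is at least $1$ while its left side is at most $0$; this contradiction is valid for every $r\in\mathbb{N}$.

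For part (ii), the substitution $(x,y,z)=(r,1,1)$ together with $C-B=(f-g)^2$ gives $w^r(f-g)^r(f+g)^r=w(f-g)^2$. As $f\neq g$, I cancel $w(f-g)^2$ in the domain $K[t]$ to obtain the identity $w^{r-1}(f-g)^{r-2}(f+g)^r=1$. Reading off degrees forces $(r-1)\delta(w)+(r-2)\delta(f-g)+r\,\delta(f+g)=0$, and for $r\geq 3$ every coefficient is positive, so $\delta(w)=\delta(f-g)=\delta(f+g)=0$; then $f=\tfrac12\big((f+g)+(f-g)\big)$ would be constant, contradicting the hypothesis. For part (iii) the same substitution with $x=2$ gives $w^2(f-g)^2(f+g)^2=w(f-g)^2$, and cancelling $w(f-g)^2$ leaves exactly $w(f+g)^2=1$. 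A nonzero constant product of polynomials forces both factors to be constant, so $w$ is constant and $f+g$ is a constant with $(f+g)^2=1/w$, whence $f+g=\pm 1/\sqrt w$ using that $K$ is algebraically closed; the converse substitution is immediate, giving the claimed equivalence.

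The computations are routine; the only genuine care lies in the bookkeeping of which factors are guaranteed nonconstant, so that a usable root exists, and in checking that the ``low-degree'' side ($2wg^2$ in (i), $f^2+2fg-g^2$ in (iv)) truly fails to absorb the $r$-th power on the other side at the chosen root. The main obstacle I anticipate is the degenerate possibility in part (ii) that $f-g$ or $f+g$ is itself constant; the uniform degree argument after cancellation dispatches this automatically, which is why I prefer it to a root-by-root comparison there.
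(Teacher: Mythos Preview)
Your argument is correct. Parts (ii) and (iii) coincide with the paper's proof: both cancel $w(f-g)^2$ from $w^r(f-g)^r(f+g)^r=w(f-g)^2$ to reach $w^{r-1}(f-g)^{r-2}(f+g)^r=1$ and then read off degrees. For (i) and (iv), however, you take a different route. In (i) the paper argues globally by degree comparison, splitting into the cases $r=1$, $r=2$ and $r>2$, whereas your single valuation computation at a root of $f$ handles all $r$ uniformly. The more substantial divergence is in (iv): the paper factors $C=(f+gi)(f-gi)$ over the algebraic closure, rewrites $A+B$ via $f^2-g^2=(f\pm gi)^2\mp 2fgi$, and uses that $f\pm gi$ is coprime to $fg$ to force $f\pm gi$ constant; you instead use the real identity $A+B=C+2g(f-g)$ and compare orders at a root $\beta$ of $C$, avoiding the introduction of $i$ altogether. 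Your approach is more elementary and also covers $r=1$ directly (the paper's proof of (iv) only treats $r\geq2$), while the paper's factorization method generalizes more readily to other algebraic manipulations of Pythagorean triples. Either route is fine here.
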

\begin{proof}
If $(x,y,z)=(1,r,1)$ is a solution then 
from equation (\ref{Pythagorean poly}) 
we have 
\[ (2wfg)^r = 2wg^2. \] 
If $r=1$, then $f=g$, a contradiction. 
The inequality $r > 2$ is impossible by 
degree comparison.  
If $r = 2$ then $w$ and 
$f$ must be constant. 
Since $\delta(f) \geq \delta(g)$, 
$g$ must be constant as well.  
But this is contrary to our hypothesis.
This proves (i).

Let $r \geq 2$. With $(x,y,z)=(r,1,1)$ in equation 
(\ref{Pythagorean poly}), we have 
\[ w^{r-1}(f^2 - g^2)^r = (f-g)^2 .  \]
Since $f$ and $g$ are relatively prime, 
the above equation is equivalent to 
\begin{equation}\label{x=r,y=z=1}
w^{r-1}(f-g)^{r-2}(f+g)^r = 1.
\end{equation} 
If $r=2$ then $w$ is constant and 
equation (\ref{x=r,y=z=1}) is equivalent 
to $f+g = \pm \dfrac{1}{\sqrt{w}}$. 
This verifies (iii).
If $r\geq 3$, then equation (\ref{x=r,y=z=1}) 
implies that $f-g$ and $f+g$ are both constants. 
Hence $f$ and $g$ are both constants. 
This contradiction proves (ii).

Now assume that $(x,y,z)=(1,1,r)$, with $r \geq 2$, 
is a solution to equation (\ref{Pythagorean poly}). 
We have 
\begin{equation*}\label{x=y=1,z=r}
f^2 - g^2 + 2fg = w^{r-1}(f^2 + g^2)^r = w^{r-1}(f+gi)^r(f-gi)^r, 
\end{equation*}
where $i$ is a square root of $-1$ in $K$.
Since $f^2 - g^2 = (f \pm gi)^2 \mp 2fgi$, the 
above equation can be expressed as 
\[ (f \pm gi)^2 (1-w^{r-1}(f \pm gi)^{r-2}(f \mp gi)^r) = 2fg(\pm i-1). \]
Note that $f \pm gi$ is relatively prime to $fg$. 
Thus we see from the equation above that 
$f+gi$ and $f-gi$ are both constants. 
Therefore $f$ and $g$ are both constants. But this 
is absurd. This completes the proof of (iv) 
and the proposition.
\end{proof}

We now prove the main result in the 
case where $w$ is constant:

\begin{prop}\label{coprime-case}
Let $K$ be a field with characteristic $0$. 
Let $(f^2-g^2,2fg,f^2 + g^2)$ be a primitive 
Pythagorean triple where $f$ and $g$ are 
nonconstant relatively prime polynomials 
over $K$. 
Suppose $w$ is a nonzero constant.
Then the only positive integer solutions to 
equation \ref{Pythagorean poly} are 
$(x,y,z)=(2,2,2)$ and $(x,y,z)=(2,1,1)$.
The latter occurs if and only if 
$f = -g \pm 1/\sqrt{w}$. 
\end{prop}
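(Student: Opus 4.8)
The plan is to assemble the three preparatory propositions into a finite case check. First I would invoke Proposition \ref{bounds}: since the leading terms of $f^2$ and $g^2$ fall into exactly one of its three cases, two of the three exponents $x,y,z$ are automatically $\leq 2$, hence each lies in $\{1,2\}$. Thus every solution has a designated pair of exponents ranging over $\{1,2\}$, while the third exponent is a priori free.

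Next I would run through the four possibilities for that distinguished pair. When both members equal $2$, Proposition \ref{two are two} forces the third exponent to be $2$ as well, giving $(2,2,2)$. When both equal $1$, the solution has two of its coordinates equal to $1$, and the corresponding clause of Proposition \ref{two are one} applies: in the configurations $(1,r,1)$ and $(1,1,r)$ no solution exists, while in $(r,1,1)$ the only survivor is $(2,1,1)$, which by Proposition \ref{two are one}(iii) occurs precisely when $w$ is constant and $f+g=\pm 1/\sqrt{w}$; a direct substitution rules out $(1,1,1)$. This recovers exactly the two solutions claimed, and also confirms that the exceptional solution lives in the case $\mathrm{LT}(f^2)=\mathrm{LT}(g^2)$, consistent with $f+g$ being a nonzero constant.

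The substance of the argument is the remaining \emph{mixed} situation, in which the distinguished pair takes the values $(1,2)$ or $(2,1)$ and the third exponent is unconstrained. Here I would first pin the free exponent down to a short finite list using the relevant Mason--Stothers inequality among (\ref{x-bound})--(\ref{z-bound}); whenever that list contains the value $2$, the solution acquires a second exponent equal to $2$ and is absorbed by Proposition \ref{two are two}. For the finitely many genuinely mixed exponent vectors that remain, I would substitute into (\ref{Pythagorean poly}) and exploit the fact that in a relation $P+Q=R$ the maximal degree is attained by at least two of the terms: matching the top degrees (using that $w$ is constant, so $\delta((wP)^n)=n\,\delta(P)$) yields a numerical relation between $\delta(f)$ and $\delta(g)$, and comparing the corresponding leading coefficients together with the pairwise coprimality of $A,B,C$ forces $f$ and $g$ to be constant, contradicting the hypothesis. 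The main obstacle is precisely the bookkeeping in this step: the cancellation of the two top terms does not terminate the argument by itself, so one must combine the degree constraint, the leading-coefficient identity, and the reduction to Proposition \ref{two are two} to close every branch.
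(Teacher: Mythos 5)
Your overall architecture is the same as the paper's: Proposition \ref{bounds} puts two designated exponents into $\{1,2\}$, Proposition \ref{two are two} absorbs the pair $(2,2)$, and Proposition \ref{two are one} disposes of the configurations with two exponents equal to $1$ and isolates $(2,1,1)$ together with the condition $f+g=\pm 1/\sqrt{w}$. Those parts are fine, as is your observation that the exceptional solution lives in the case $\mathrm{LT}(f^2)=\mathrm{LT}(g^2)$. You are also right that the mixed case is where the real work lies.

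The gap is in your proposed treatment of that mixed case. Take the sub-case $\mathrm{LT}(f^2)\neq\pm\mathrm{LT}(g^2)$ with $(x,z)=(1,2)$ and $y$ free. Inequality (\ref{y-bound}) only gives $y(\delta(f)+\delta(g))\le 5\delta(f)+\delta(g)-1$, which permits $y=3$ whenever $\delta(g)<\delta(f)$. For $(x,y,z)=(1,3,2)$ both of your tools come up empty: matching top degrees in $wA+(wB)^3=(wC)^2$ forces only $3(\delta(f)+\delta(g))=4\delta(f)$, i.e.\ $\delta(f)=3\delta(g)$, and equating the two leading coefficients yields a single \emph{solvable} relation among the leading coefficients of $f$, $g$ and the constant $w$ --- neither conclusion forces $f$ or $g$ to be constant, and coprimality of $A,B,C$ has not yet entered in any effective way. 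The same happens for $(2,3,1)$ and for the analogous patterns involving a $3$ (and some involving a $4$) in the other two leading-term sub-cases. What actually closes these branches is a divisibility argument playing the hypothetical equation against the identity $(wA)^2+(wB)^2=(wC)^2$: for $(1,3,2)$, subtracting gives $wA(1-wA)=(wB)^2(1-wB)$, and since $A$ is coprime to $B$ and each of $wA$, $wB$ is coprime to $1-wA$, resp.\ $1-wB$, one gets $B^2\mid(1-wA)$ and $A\mid(1-wB)$, hence $2\delta(B)\le\delta(A)\le\delta(B)$, which is absurd. This mechanism --- not degree and leading-coefficient bookkeeping --- is exactly what the paper's Lemmas \ref{1-2-3} and \ref{3-1-2} implement in the harder nonconstant-$w$ setting, and its (simpler) constant-$w$ version is the ingredient missing from your plan.
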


\begin{proof}
If $\mathrm{LT}(f^2) \neq \mathrm{LT}(g^2)$, 
then $x,z \leq 2$ or $x,y \leq 2$ by 
Proposition \ref{bounds}-(i) and (ii). 
But items (i), (ii) and (iv) of 
Proposition \ref{two are one} imply that $x=z=2$ or $x=y=2$. 
Proposition \ref{two are two} 
then gives the solution $(x,y,z)=(2,2,2)$.
On the other hand, when 
$\mathrm{LT}(f^2) = \pm \mathrm{LT}(g^2)$, 
Proposition \ref{bounds}-(i) 
implies that we have $y,z \leq 2$. Then 
Propositions \ref{two are two} 
and \ref{two are one} imply that 
$(x,y,z)=(2,2,2)$ or $(2,1,1)$; 
and the latter holds precisely when 
$f=-g \pm 1/\sqrt{w}$. This completes the proof 
of the Proposition. 
\end{proof}

\section{The non-primitive case}\label{Noncoprime}

In this section, we treat the case where 
$w$ is a nonconstant polynomial. 
We begin with the following variant of 
relations (\ref{x-bound}), (\ref{y-bound}) 
and (\ref{z-bound}). 

\begin{prop}\label{non-coprime1}
Let $(A,B,C)=(f^2-g^2, 2fg, f^2 + g^2)$ be a 
primitive Pythagorean triple and 
$w$ be a nonconstant polynomial.
Let $(x,y,z)$ be a positive integer solution 
to equation (\ref{Pythagorean poly}). 
\begin{enumerate}
\item If $x < m := \min \{y,z\}$ then 
\begin{align}
x \delta(A) &< \delta(A) + \delta(B) + 
\delta(C) + m \delta(w) - 1, \label{x-bound_noncoprime1} \\
y \delta(B) &\leq \delta(A) + \delta(B) + 
\delta(C) - 1, \label{y-bound_noncoprime1} \\
z \delta(C) &\leq \delta(A) + \delta(B) + 
\delta(C) - 1. \label{z-bound_noncoprime1} 
\end{align}

\item If $y < m := \min \{x,z\}$ then 
\begin{align}
x \delta(A) &\leq \delta(A) + \delta(B) + 
\delta(C) - 1, \label{x-bound_noncoprime2} \\
y \delta(B) &< \delta(A) + \delta(B) + 
\delta(C) + m \delta(w) - 1, \label{y-bound_noncoprime2} \\
z \delta(C) &\leq \delta(A) + \delta(B) + 
\delta(C) - 1. \label{z-bound_noncoprime2} 
\end{align}

\item If $z < m := \min \{x,y\}$ then 
\begin{align}
x \delta(A) &\leq \delta(A) + \delta(B) + 
\delta(C) - 1, \label{x-bound_noncoprime3} \\
y \delta(B) &\leq \delta(A) + \delta(B) + 
\delta(C) - 1, \label{y-bound_noncoprime3} \\
z \delta(C) &< \delta(A) + \delta(B) + 
\delta(C) + m \delta(w) - 1. \label{z-bound_noncoprime3} 
\end{align} 
\end{enumerate}
\end{prop}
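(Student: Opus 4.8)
The plan is to reuse the Mason--Stothers argument of Section \ref{Coprime}, but first strip off the largest common power of $w$ so that the theorem can be applied to a genuinely coprime triple. I treat case (1) in detail; cases (2) and (3) are obtained by interchanging the roles of $(A,x)$ with $(B,y)$ and with $(C,z)$, using that the bound in Theorem \ref{Mason} is symmetric in $a,b,c$. Rewriting equation (\ref{Pythagorean poly}) as $w^{x}A^{x}+w^{y}B^{y}=w^{z}C^{z}$ and using $x<m=\min\{y,z\}$, I divide through by $w^{x}$ to get $P+Q=R$ with $P=A^{x}$, $Q=w^{y-x}B^{y}$, $R=w^{z-x}C^{z}$. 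These three polynomials need not be coprime, so I set $d=\gcd(P,Q,R)$ and apply Theorem \ref{Mason} to the coprime triple $P/d,Q/d,R/d$ (none of which is constant, since $A,B,C$ are nonconstant and coprime). Because $A,B,C$ are pairwise coprime, every prime factor of $d$ divides both $A$ and $w$, so $d\mid w^{y-x}$ and $d\mid w^{z-x}$, whence $\delta(d)\le(m-x)\delta(w)$.

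For the $x$-bound I note that the distinct roots of $PQR/d^{3}=w^{y+z-2x}A^{x}B^{y}C^{z}/d^{3}$ all lie among those of $wABC$, so $\eta(PQR/d^{3})\le\delta(w)+\delta(A)+\delta(B)+\delta(C)$. Since $\delta(P/d)=x\delta(A)-\delta(d)$, Theorem \ref{Mason} yields $x\delta(A)\le \eta(PQR/d^{3})-1+\delta(d)\le \delta(A)+\delta(B)+\delta(C)+(1+m-x)\delta(w)-1$. As $x\ge1$ we have $1+m-x\le m$, which gives (\ref{x-bound_noncoprime1}); and the inequality is strict because for $x\ge2$ the slack is $(x-1)\delta(w)\ge\delta(w)\ge1$ (here $w$ is nonconstant), while for $x=1$ the bound is immediate, its right-hand side already exceeding $\delta(A)$.

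For (\ref{y-bound_noncoprime1}) and (\ref{z-bound_noncoprime1}) I exploit that $y>x$ and $z>x$. From $\delta(Q/d)=(y-x)\delta(w)+y\delta(B)-\delta(d)\le\eta(PQR/d^{3})-1$ one sees that the factor $w^{y-x}$ sitting on the left-hand side must absorb the entire $w$-contribution to the radical: the roots of $w$ lying off $A,B,C$ and the common factor $d$ are both covered by $(y-x)\delta(w)$, since $y-x\ge1$ and $\delta(d)\le(m-x)\delta(w)\le(y-x)\delta(w)$, whereas the roots of $w$ shared with $A,B,C$ are already accounted for inside $\delta(A),\delta(B),\delta(C)$. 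Carrying out this cancellation leaves exactly $y\delta(B)\le\delta(A)+\delta(B)+\delta(C)-1$, and the same computation with $z$ in place of $y$ gives (\ref{z-bound_noncoprime1}).

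The delicate point throughout is the bookkeeping of the common factor $d$: a shared root of $w$ and $A$ simultaneously enlarges $\delta(d)$ and occupies a slot in the radical $\eta(PQR/d^{3})$, and one must verify prime by prime that these contributions cancel against the surplus power of $w$ present on one side of the reduced equation. I would organize this by splitting $\delta(w)$ according to which (if any) of $A,B,C$ each prime of $w$ divides; once this ledger is set up, each inequality follows from a direct comparison of degrees, and the strictness in (\ref{x-bound_noncoprime1}) is forced precisely by $w$ being nonconstant.
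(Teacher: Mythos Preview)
Your reduction (divide by $w^{x}$, then by $d=\gcd(P,Q,R)$, then apply Mason--Stothers) is sound, and your derivation of (\ref{x-bound_noncoprime1}) is correct. The gap is in your argument for (\ref{y-bound_noncoprime1}) and (\ref{z-bound_noncoprime1}). You assert that the roots of $w$ lying off $A,B,C$ are ``covered by $(y-x)\delta(w)$'', but the justification you give---namely $y-x\ge1$ and $\delta(d)\le(y-x)\delta(w)$---only shows $(y-x)\delta(w)-\delta(d)\ge0$; it does not show that this quantity dominates the number $r$ of such roots. With only the crude bound $\eta(PQR/d^{3})\le r+\delta(A)+\delta(B)+\delta(C)$, Mason applied to $Q/d$ yields $y\delta(B)\le\delta(A)+\delta(B)+\delta(C)-1+\bigl(r-(y-x)\delta(w)+\delta(d)\bigr)$, and the bracket need not be $\le0$ without further input (take $y=m$, so the bracket is exactly $r$).

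The missing observation---which the paper records at the very start of its proof---is that $r=0$. From $P+Q=R$ one has $A^{x}=w^{z-x}C^{z}-w^{y-x}B^{y}=w^{m-x}(\cdots)$ with $m-x\ge1$, so $\operatorname{rad}(w)\mid A$. Hence every root of $w$ already lies in $A$, $w$ is coprime to $B$ and to $C$, and in fact $d=w^{m-x}$ exactly. Then $\eta(PQR/d^{3})\le\eta(ABC)\le\delta(A)+\delta(B)+\delta(C)$, and your Mason inequalities for $Q/d$ and $R/d$ give (\ref{y-bound_noncoprime1}) and (\ref{z-bound_noncoprime1}) immediately; your proposed prime-by-prime ledger collapses to the single case ``divides $A$''. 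The paper uses the same observation but packages the reduction slightly differently: rather than dividing by a gcd, it notes (when $y<z$) that $w^{y-x}$ and $w^{z-y}C^{z}-B^{y}$ are coprime, so the factorization $A^{x}=w^{y-x}\cdot(w^{z-y}C^{z}-B^{y})$ forces both factors to be $x$th powers, say $A_{1}^{x}=w^{y-x}$ and $A_{2}^{x}=w^{z-y}C^{z}-B^{y}$, and then applies Mason to $A_{2}^{x}+B^{y}=w^{z-y}C^{z}$, whose radical is visibly $\eta(ABC)$.
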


\begin{proof} 
We only give the proof of the first set of 
inequalities (\ref{x-bound_noncoprime1}) - (\ref{z-bound_noncoprime1}) 
as the rest can be verified in 
exactly the same manner. 
Without loss of generality, assume $x < y \leq z$. 
Then \begin{equation}\label{noncoprime_eqtn1}
A^x = w^{y-x} (w^{z-y} C^z - B^y). 
\end{equation} 
Any zero of $w$ is a zero of $A$. Thus $w$ is coprime 
to $B$ and to $C$. 
If $y < z$ then $w$ and  $w^{z-y} C^z - B^y$ 
are coprime. Then there exist 
coprime polynomials $A_1$ and $A_2$ such 
that 
\[ (A_1A_2)^x = A^x, \quad  
A_1^x = w^{y-x}, \quad  \mbox{ and } 
\quad   
A_2^x = w^{z-y} C^z - B^y. \] 
The first and second equations indicate that   
$\eta(wA_2BC) = \eta(ABC)$. 
The third equation consists of pairwise 
coprime terms. Theorem \ref{Mason} applied 
to the third equation gives  
\[ \max\{ \delta(A_2^x), \delta(B^y), 
\delta(w^{z-y} C^z)\} \leq \eta(ABC)-1. \]
The inequalities (\ref{y-bound_noncoprime1}) 
and (\ref{z-bound_noncoprime1}) can 
be seen immediately. Using $\delta(A_2^x)$, 
we have 
\[  x \delta(A) - (y-x)\delta(w) =
x(\delta(A) -\delta(A_1)) =  x \delta(A_2) \leq \delta(A) + \delta(B) + \delta(C) - 1, \] 
or $x (\delta(A) + \delta(w)) \leq 
\delta(A) + \delta(B) + \delta(C) + y \delta(w)- 1$. 
Since $\delta(w) > 0$, 
we obtain inequality (\ref{x-bound_noncoprime1}).

Now suppose $x < y = z$. 
Since $w$ divides $A^x$, we have $\eta(wABC) = \eta(ABC)$. 
Equation (\ref{noncoprime_eqtn1}) can be written as  
\begin{equation}\label{noncoprime_eqtn2}
\frac{(wA)^x}{w^y} = C^y - B^y. 
\end{equation} 
Note that $w$ is coprime to $B$ and to $C$. 
Applying Theorem (\ref{Mason}) to 
equation (\ref{noncoprime_eqtn2}) gives 
the desired inequalities.
\end{proof}

\begin{lem}\label{1-2-3}
Let $(A,B,C) = (f^2 - g^2, 2fg, f^2 + g^2)$ 
or $(B,A,C) = (f^2 - g^2, 2fg, f^2 + g^2)$
be a primitive Pythagorean triple. 
Let $w$ be a nonzero polynomial.  
If $A$ is nonconstant of 
even degree then
\[
wA + (wB)^2 \neq (wC)^3. \]
If $C$ is nonconstant of 
even degree then
\[ (wA)^2 + (wB)^3 \neq wC.  \]
\end{lem}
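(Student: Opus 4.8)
The plan is to read each non-existence claim as the statement that a quadratic equation in $w$ has no root in $K(t)$, and to convert this into an assertion that an explicit discriminant is not a perfect square. For the first inequality, suppose for contradiction that $wA+(wB)^2=(wC)^3$. Cancelling one factor of $w$ and viewing the result as a quadratic in $w$ gives $C^3w^2-B^2w-A=0$, with nonzero leading coefficient $C^3$; completing the square yields $(2C^3w-B^2)^2=B^4+4AC^3$. Hence a solution $w\in K[t]\subseteq K(t)$ can exist only if $D:=B^4+4AC^3$ is a square in $K(t)$, and since $K[t]$ is integrally closed this forces $D$ to be a perfect square in $K[t]$. The same manipulation applied to $(wA)^2+(wB)^3=wC$ (a quadratic $B^3w^2+A^2w-C=0$) reduces that case to showing $D_2:=A^4+4B^3C$ is not a perfect square. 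Thus everything comes down to two non-square assertions.

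To treat them I would parametrise the triple. From $A^2+B^2=C^2$ we get $(C-A)(C+A)=B^2$, and any common factor of $C-A$ and $C+A$ divides $2\gcd(A,C)$, hence is constant; so $C-A,C+A$ are coprime with product a perfect square. As $K$ is algebraically closed I may write $C-A=M^2$, $C+A=N^2$ with $M,N\in K[t]$ coprime and not both constant (since $C$ is nonconstant), whence $A=(N^2-M^2)/2$, $C=(N^2+M^2)/2$, $B=MN$ (replacing $N$ by $-N$ if needed). A direct substitution turns $4D$ into the binary octic $E(M,N)=N^8+2N^6M^2+4N^4M^4-2N^2M^6-M^8$, and $16D_2$ into an analogous octic form.

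The crux is to show these octics are never squares. Writing $E(M,N)=M^8q(N^2/M^2)$ with $q(u)=u^4+2u^3+4u^2-2u-1$, one checks $\gcd(q,q')=1$ and $q(0)\neq0$, so $q$ has four distinct nonzero roots and $E=\prod_{i=1}^{8}(N-\theta_iM)$ splits into eight pairwise distinct linear forms; as $M,N$ are coprime and the $\theta_i$ distinct, the factors $N-\theta_iM$ are pairwise coprime. If $D$ (hence $E$) were a perfect square, each coprime factor would be a perfect square, say $N-\theta_iM=R_i^2$, giving eight distinct $\theta$ with $N-\theta M$ a square. This is impossible: from the Wronskian identity $2R_iR_j(R_iR_j'-R_jR_i')=(\theta_i-\theta_j)(NM'-MN')$ each $R_i$ divides the nonzero Wronskian $NM'-MN'$, and by coprimality $\prod_iR_i\mid NM'-MN'$, so $\sum_i\delta(R_i)\le\delta(M)+\delta(N)-1$. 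But $\delta(N-\theta_iM)=\max(\delta M,\delta N)$ for all but at most one $\theta_i$ (here $\theta_i\neq0$), so all but one $\delta(R_i)$ equal $\tfrac12\max(\delta M,\delta N)$; this bounds the number of admissible $\theta$ by $4$, contradicting $8$. The octic $E$ for $D_2$ (which one verifies is likewise squarefree) is excluded identically.

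The main obstacle is exactly this last step: producing the octic discriminant, checking it is squarefree, and ruling out its being a square. The preparatory degree bookkeeping is easy — comparing leading terms in $A(1-wA)=wC^2(wC-1)$ (obtained from the equation together with $A^2+B^2=C^2$) forces $\delta(w)=2\delta(A)-3\delta(C)$, which already pins the configuration to the case $\delta(f)=\delta(g)$ with $\mathrm{LT}(f^2)=-\mathrm{LT}(g^2)$, where $A$ has even degree as hypothesised (and the dual identity $C(1-wC)=wB^2(wB-1)$ similarly forces the even-degree-$C$ configurations). What defeats the direct Mason--Stothers degree estimate here is that it leaves an irreducible slack of $\delta(C)-1$; it is the coprimality-plus-Wronskian count above, rather than any single degree inequality, that finally closes the gap.
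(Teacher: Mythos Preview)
Your argument is correct and genuinely different from the paper's.

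The paper proceeds by elimination: from $wA+(wB)^2=(wC)^3$ and the Pythagorean relation it derives $A(wA-1)=wC^{2}(1-wC)$, introduces the cofactor $d$ with $wA-1=C^{2}d$ and $w(1-wC)=Ad$, and then argues root-by-root that $d$ must be a square. The even-degree hypothesis on $A$ is used at this point to force $\delta(w)$ even, hence $wA=h^{2}$, and the contradiction comes from $h^{2}-(Cs)^{2}=1$ having only constant solutions. Your route is more structural: treat the relation as a quadratic in $w$, pass to the discriminant $D=B^{4}+4AC^{3}$ (respectively $D_{2}=A^{4}+4B^{3}C$), substitute the parametrisation $A=(N^{2}-M^{2})/2$, $B=MN$, $C=(N^{2}+M^{2})/2$ to obtain an explicit degree-eight binary form, and finish with the Wronskian pencil bound that at most four values of $\theta$ can make $N-\theta M$ a square. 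This is cleaner, and it has the pleasant side effect that you never invoke the even-degree hypothesis: your argument actually proves the two inequalities under the weaker assumption that $f,g$ are not both constant.

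Two small points. First, your appeal ``one verifies is likewise squarefree'' for the $D_{2}$ octic should be carried out; it is routine (the form is palindromic in $N$, so the substitution $s=N+N^{-1}$ reduces it to checking that $s^{4}-8s^{2}+32s+16$ is squarefree and misses $\pm2$, which it does), but it is the only genuinely case-specific input and deserves a line. Second, the closing paragraph about degree bookkeeping and the $\delta(C)-1$ slack in Mason--Stothers is interesting motivation but plays no logical role in your proof; you could drop it without loss.
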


\begin{proof}
We will only prove the case where 
$(A,B,C) = (f^2 - g^2, 2fg, f^2 + g^2)$ as 
the proof for the other case is similar. 

Assume on the contrary that 
\begin{equation}\label{eqn1-2-3}
wA + (wB)^2 = (wC)^3.
\end{equation} 
Then $w$ divides $A$, say 
$A = wA_1$.  
Combining with the hypothesis 
that $(wA, wB, wC)$ is a Pythagorean triple, we have 
\[ A(wA-1) =  wC^2 (1- wC). \]
Since $wA$ is coprime to $C$, we find that $C^2$ divides $1 - wA$ 
and $A$ divides $w(wC-1)$. Let $d$ be the greatest common 
divisor of $wA-1$ and $w(1-wC)$. 
Note that $d$ and $w$ are coprime. 
Then we can write  
\begin{equation}\label{divisors1} 
wA - 1 = C^2 d \qquad \mbox{ and } 
\qquad w(1-wC) = Ad. 
\end{equation}
Then 
\[ (C^2d + 1)^2 + (wB)^2 = (1 - A_1 d)^2 \]
or 
\begin{equation}\label{divisors2}
C^4 d^2 + (wB)^2 - (A_1 d)^2 = -2(C^2 + A_1)d. 
\end{equation}
We claim that $d$ is a square. 
If $d$ is constant, then $wA-1$ and $w(1-wC)$ 
are coprime. In this case, we may assume 
without loss of generality that $d = 1$.  
Suppose $d$ is nonconstant and 
let $\alpha$ be a zero of $d$ of odd 
multiplicity $r$. 
Since $d$ divides $C-A = 2g^2$ then 
$g$ is nonconstant and $\alpha$ is 
a zero of $g$. We see that $\alpha$ is a 
zero of $B^2$ of multiplicity at least $r+1$. 
Since $(t-\alpha)^{2r}$ divides $d^2$ 
and $(t - \alpha)^{r+1}$ does not divide $d$, 
equation (\ref{divisors2}) implies that 
$t-\alpha$ divides $C^2 + A_1$. 
So $\alpha$ is a zero of 
$w(C^2 + A_1) + C-A = C(wC+1)$. As 
$d$ and $C$ are coprime, $\alpha$ is a zero 
of $wC+1$. But $\alpha$ is also a zero of 
$wC-1$, a contradiction. This proves our 
claim that $d$ is a square.  

Since $w$ and $1-wC$ are coprime and 
$\delta(A)$ is even, the second equation in 
(\ref{divisors1}) implies that 
$\delta(w)$ is even. 
Hence, $wA = h^2$ for some 
nonconstant polynomial $h$. 
Write $d = s^2$ for some polynomial $s$. 
The first equation in (\ref{divisors1}) 
implies that $h+Cs$ and $h-Cs$ are 
both constant. But these imply 
that $C$, and hence $f+ig$ and $f -ig$ are 
all constant. Consequently, $f$ and $g$ are 
constant polynomials, which is absurd.
Therefore, $wA + (wB)^2 \neq (wC)^3$. 

The same approach as above 
allows us to prove that if 
$C$ is nonconstant of even degree, 
then $(wA)^2 + (wB)^3 \neq wC$. 
This completes the proof of the lemma.
\end{proof}

\begin{lem}\label{3-1-2}
Let $(A,B,C) = (f^2 - g^2, 2fg, f^2 + g^2)$ 
or $(B,A,C) = (f^2 - g^2, 2fg, f^2 + g^2)$
be a primitive Pythagorean triple. 
Let $w$ be a nonconstant polynomial.  
Suppose that 
$\delta(A) \leq \delta(B) = \delta(C)$.  
Then 
\[ (wA)^3 + wB \neq (wC)^2. \]
\end{lem}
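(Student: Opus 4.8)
The plan is to argue by contradiction, following the mechanism of Lemma~\ref{1-2-3}: extract from the assumed identity a rigid algebraic relation and then show it cannot hold for nonconstant $f,g$. Assume $(wA)^3 + wB = (wC)^2$ and subtract the Pythagorean identity $(wA)^2 + (wB)^2 = (wC)^2$; this gives $(wA)^2(wA-1) = wB(wB-1)$, i.e.
\[ wA^2(wA-1) = B(wB-1). \]
Dividing the original identity by $w$ yields $w^2A^3 + B = wC^2$, so $w \mid B$; since $\gcd(A,B)=1$ forces $\gcd(w,A)=1$, I may write $B = wB_1$ with $\gcd(w,B_1)=1$, and the displayed relation becomes $A^2(wA-1) = B_1(wB-1)$. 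Because $\gcd(A,B_1)=1$, and each of $wA-1,\,wB-1$ is coprime to $A$ and to $B_1$, there is a single polynomial $d=\gcd(wA-1,wB-1)$ with
\[ wB-1 = A^2 d, \qquad wA-1 = B_1 d. \]
Reading off degrees (and using $\delta(B)=\delta(C)$) forces $\delta(w)=2\delta(C)-3\delta(A)$; in particular $3\delta(A)<2\delta(C)$, and together with $\delta(B_1)\ge0$ and $\delta(d)\ge0$ this squeezes $\delta(A)$ into the narrow band $\tfrac13\delta(C)\le\delta(A)\le\tfrac35\delta(C)$. This confirms the configuration is highly constrained but does not yet yield a contradiction.

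The cleanest finish, I expect, is to reinterpret the reduced equation as a quadratic in $w$: it says precisely that $w$ is a root of
\[ A^3\,W^2 - C^2\,W + B = 0 . \]
A polynomial root can exist only if the discriminant $C^4 - 4A^3B$ is a square in $K[t]$ and $2A^3$ divides $C^2\pm\sqrt{C^4-4A^3B}$. Using $C^2=A^2+B^2$ one gets the suggestive identity
\[ C^4 - 4A^3B = (A^2 - 2AB - B^2)^2 - 4AB^3 , \]
so the whole lemma reduces to showing that $(A^2-2AB-B^2)^2 - 4AB^3$, with $A=f^2-g^2,\ B=2fg$ (or the two interchanged), is never a perfect square producing a polynomial $w$ of the forced degree. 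Equivalently, I would show the two divisibilities above are jointly unrealisable: comparing leading terms in $A^2(wA-1)=B_1(wB-1)$ forces $\mathrm{LT}(w)\,\mathrm{LT}(A)^3=\mathrm{LT}(B)^2$, whereas $B_1\mid wA-1$ pins the values of $w$ at the roots of the complementary factors — at each root of $g/\gcd(w,g)$ one gets $wf^2=1$, and at each root of $f/\gcd(w,f)$ one gets $wg^2=-1$ — and these conditions together with the leading one over‑determine $w$.

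The main obstacle is exactly this last step: converting the local value conditions into a numerical contradiction uniformly in the degree. A faithful model is the lowest case $\delta(f)=\delta(g)=1$: writing $f=t+p$, $g=t+q$, $s=p-q$, $w=\kappa(t+p)$, the condition $B_1\mid wA-1$ evaluated at the root of $g$ forces $\kappa s^3=1$, while the leading‑term identity $\mathrm{LT}(w)\mathrm{LT}(A)^3=\mathrm{LT}(B)^2$ forces $\kappa s^3=\tfrac12$ — an outright contradiction. I anticipate the general case runs the same way, but it requires either a careful multiplicity analysis at the roots of $A$ (where $A^2\mid wB-1$ forces $wB-1$ to vanish to total order $2\delta(A)$, constraining the available coefficients of $w$) or a direct proof that the displayed discriminant cannot be a square compatible with $2A^3\mid C^2\pm\sqrt{C^4-4A^3B}$. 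Finally, the second ordering $(B,A,C)=(f^2-g^2,2fg,f^2+g^2)$ is handled by the identical computation with $A$ and $B$ exchanged, and the degenerate subcase in which $d$ is constant is immediate from the degree relation $\delta(w)=2\delta(C)-3\delta(A)$.
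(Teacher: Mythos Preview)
Your setup is correct and in fact coincides with the paper's: you correctly deduce $w\mid B$, write $B=wB_1$, obtain $A^2(wA-1)=B_1(wB-1)$, and factor this as
\[
wA-1=B_1 d,\qquad wB-1=A^2 d,
\]
together with the degree relation $\delta(w)+3\delta(A)=2\delta(C)$. The gap is everything after this point. You explicitly acknowledge that ``the main obstacle is exactly this last step'' and then carry out only the toy case $\delta(f)=\delta(g)=1$; the discriminant reformulation and the local-value conditions at roots of $f$ and $g$ are suggestive but you do not turn them into a contradiction in general. Anticipating that ``the general case runs the same way'' is not a proof, and in fact the contradiction in your degree-$1$ model comes from a coincidence (the single root of $B_1$ already determines $w$ up to a scalar) that does not persist when $B_1$ has many roots: the value constraints $w(\alpha)f(\alpha)^2=1$, $w(\beta)g(\beta)^2=-1$ then under-determine a $w$ of degree $2\delta(C)-3\delta(A)$ rather than over-determining it.

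What you are missing is one more relation hidden in the same system. Multiplying your two equations (or, equivalently, using $(wA)(wB)=(B_1d+1)(A^2d+1)$ together with $wB=w^2B_1$) gives
\[
w^2AB-1=C^2 d.
\]
Working modulo $A$ (from $wB\equiv 1$ and $B_1d\equiv -1$) yields $A\mid B_1(w^2+d)$, hence $A\mid w^2+d$; working modulo $B_1$ (from $wA\equiv 1$ and $A^2d\equiv -1$) yields $B_1\mid w^2+d$ as well. Thus $AB_1\mid w^2+d$, so $AB\mid w(w^2+d)$, and comparing degrees forces $\delta(w)=\delta(f)$ and $\delta(A)=\delta(f)$. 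Now $A=(f-g)(f+g)$ with $\delta(A)=\delta(f)$ means one of $f\pm g$ is constant, after which the congruence $wB\equiv 1\pmod{A^2}$ immediately contradicts $\delta(w)=\delta(f)$. This single divisibility $AB_1\mid w^2+d$ is the missing idea; once you have it, the discriminant and root-by-root analysis are unnecessary.
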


\begin{proof}
We only prove the case where 
$(A,B,C) = (f^2 - g^2, 2fg, f^2 + g^2)$ as 
the proof for the other case is similar. 
Without loss of generality, assume  
$\delta(f) \geq \delta(g)$. So 
$\delta(f) \leq \delta(A) \leq \delta(B) = \delta(C) = 2 \delta(f)$. 

Suppose that 
\begin{equation}\label{hyp3-1-2}
(wA)^3 + wB = (wC)^2.
\end{equation}  
Then $w$ divides $B$, say $B = wB_1$.  
Since $(wA, wB, wC)$ is a Pythagorean triple, we have 
\[ A^2(wA-1) =  B_1(wB-1). \]
By comparing degrees, we see that  
\[ \delta(w) + 3 \delta(A) = 4 \delta(f). \] 
Thus, $\delta(w) \leq \delta(f)$.

Since $A$ is coprime to $B$, we find that 
$A^2$ divides $wB - 1$ 
and $B_1$ divides $wA-1$. Let $d$ be the greatest common 
divisor of $wA-1$ and $wB-1$. 
Note that $d$ and $w$ are coprime. 
Then we can write  
\begin{equation}\label{divisors1.1} 
wA - 1 = B_1 d \qquad \mbox{ and } 
\qquad wB - 1 = A^2 d. 
\end{equation}
In addition, we have 
\begin{equation}\label{divisors2.1}
w^2AB - 1 = C^2 d.
\end{equation}
From (\ref{divisors1.1}), we see that 
$A$ divides $wB + B_1d$. Since 
$A$ is coprime to $B$, this  
implies that $A$ divides $w^2 + d$.
Since $B_1$ is coprime to $C$, 
equations (\ref{hyp3-1-2}) 
and (\ref{divisors2.1}) imply that 
$B_1$ divides $w^2 + d$. 
Hence, $AB$ divides $w^3 + dw$.
Consequently, 
\[ 
3 \delta(f) \leq
\delta(A) + \delta(B) =
\delta(AB) \leq 3 \delta(w). 
\]
Therefore $\delta(w) = \delta(f)$ and 
the above chain of inequalities is 
a chain of equalities. In particular, 
we have $\delta(A) = \delta(f)$. 
Since $A = f^2 - g^2$, 
either $f-g$ or $f+g$ is a constant. 
Without loss of generality, suppose 
$f+g$ is constant. Equation (\ref{hyp3-1-2}) 
implies that 
\[ 2wfg \equiv 1 \pmod{(f-g)^2}. \]
Applying the identity 
$(f+g)^2 - (f-g)^2 = 4fg$ gives 
\[ w(f+g)^2 \equiv 2 \pmod{(f-g)^2}. \] 
Since $f+g$ is constant, we see that 
\[ 2 \delta(f) = 2 \delta(f-g) \leq \delta(w) = \delta(f). \]
This means that $f$, and hence $g$, must be constants. 
Contradiction.  
\end{proof}

\begin{prop}\label{noncoprime_exponents are equal}
Let $(A,B,C)=(f^2-g^2, 2fg, f^2 + g^2)$ be a 
primitive Pythagorean triple and 
$w$ be a nonconstant polynomial.
Let $(x,y,z)$ be a positive integer solution 
to equation (\ref{Pythagorean poly}). 
Then $x = y = z$.
\end{prop}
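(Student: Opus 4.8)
The plan is to argue by contradiction: assume $x,y,z$ are not all equal and show that the exponents are then forced into a short finite list of triples, each of which has already been excluded or is excluded by one of the two preceding Lemmas. Throughout write $P=\delta(w)\geq 1$, and recall that $A,B,C$ are pairwise coprime and all nonconstant, while the leading-term trichotomy of Proposition \ref{bounds} records the relative sizes of $\delta(A),\delta(B),\delta(C)$.

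First I would split according to whether $\min\{x,y,z\}$ is attained once or twice. If it is attained exactly once, say by $x$, then dividing equation (\ref{Pythagorean poly}) by $w^{x}$ shows that every zero of $w$ is a zero of $A$, so Proposition \ref{non-coprime1}(1) applies and the clean inequalities (\ref{y-bound_noncoprime1}) and (\ref{z-bound_noncoprime1})---which do not involve $P$---bound the two non-minimal exponents. If the minimum is attained twice, say $x=y<z$, then dividing by $w^{x}$ gives $A^{x}+B^{x}=w^{z-x}C^{z}$; here no zero of $w$ can be a zero of $A$ or of $B$ (else $A$ and $B$ would share that zero), so the three terms are pairwise coprime and Theorem \ref{Mason} applies directly, again bounding $x$ and $z$. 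In either situation the symmetry $(A,B,C)\leftrightarrow(B,A,C)$ lets me choose a labeling matching the hypotheses of Lemmas \ref{1-2-3} and \ref{3-1-2}.

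Next I would bound the remaining extremal exponent by a leading-order argument: since the three terms of (\ref{Pythagorean poly}) sum to zero, the maximal degree is attained at least twice, so an exponent that is too large forces a strictly dominant $w$-contribution and hence a unique top term, a contradiction. Combining this with the Mason bounds above and the degree data of Proposition \ref{bounds} makes $x,y,z$ all bounded by a small constant. Enumerating the finitely many non-equal triples, I would dispatch them in families: triples with two entries equal to $1$ are ruled out by Proposition \ref{two are one}; triples in which two entries equal $2$ collapse, via the identities $A^{2}+B^{2}=C^{2}$, $C^{2}-A^{2}=B^{2}$ and $C^{2}-B^{2}=A^{2}$, to relations such as $w^{k-2}C^{k-2}=1$ or $w^{k-2}A^{k-2}=1$, which force $w$ and $f,g$ to be constant; and the six permutations of $(1,2,3)$ are eliminated by Lemma \ref{1-2-3} (for $(1,2,3),(2,1,3),(2,3,1),(3,2,1)$) and Lemma \ref{3-1-2} (for $(3,1,2),(1,3,2)$). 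The only surviving possibility is $x=y=z$.

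The main obstacle I anticipate is twofold bookkeeping. First, one must check that the Mason-type bounds together with the ``maximal degree attained twice'' constraint genuinely cut the exponents down to the range handled above in every case of the leading-term trichotomy; the delicate interplay is that where $\delta(A)=\delta(B)$ the cancellation in a sum like $A^{k}+B^{k}$ could in principle lower degrees, but there the Mason inequalities already forbid the large exponent, whereas where Mason permits a large exponent (namely when some degree is strictly smaller) the dominant term blocks the cancellation---so the two mechanisms interlock. Second, before invoking Lemmas \ref{1-2-3} and \ref{3-1-2} one must verify their degree/parity hypotheses (``$A$ nonconstant of even degree'', ``$C$ nonconstant of even degree'', or ``$\delta(A)\leq\delta(B)=\delta(C)$'') in the surviving leading-term case, which is exactly where the classification in Proposition \ref{bounds} is used. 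Once these points are settled the contradiction is complete and $x=y=z$ follows.
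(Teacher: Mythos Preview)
Your proposal is correct and follows essentially the same route as the paper: assume the exponents are not all equal, use Proposition~\ref{non-coprime1} (Mason--Stothers after cancelling the common $w$-power) together with the leading-term trichotomy to force the exponents into a short list, then kill the survivors with Proposition~\ref{two are two}, Proposition~\ref{two are one}, and the two technical Lemmas~\ref{1-2-3} and~\ref{3-1-2}. The only organisational difference is that the paper argues separately that each of $x,y,z$ is at least the minimum of the other two, whereas you split instead on whether the minimum is attained once or twice and then enumerate the residual triples; your explicit treatment of the ``minimum attained twice'' case (e.g.\ $x=y<z$, handled by applying Mason directly to $A^{x}+B^{x}=w^{z-x}C^{z}$) is in fact a point the paper's framing leaves implicit, so your bookkeeping is if anything tighter.
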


\begin{proof}
To prove the proposition, we show that 
if $(x,y,z)$ is a positive integer solution 
to equation (\ref{Pythagorean poly}), then  
we must have $x \geq \min \{y,z\}$, 
$y \geq \min \{x,z\}$ and 
$z \geq \min \{x,y\}$, all at the same time.  

Without loss of generality, we suppose that 
$\delta(f) \geq \delta(g)$. 
As we saw in the last section, the proof  
for each case is split into three 
subcases depending on the relationship between 
the leading terms of $f^2$ and $g^2$ and its  
consequences on the relationship between 
$\delta(A)$, $\delta(B)$ and $\delta(C)$. 
For reference, we list them below:

(1) $\mathrm{LT}(f^2) \neq \pm \mathrm{LT}(g^2)$, so 
that $\delta(f) \leq \delta(B) \leq \delta(A) = \delta(C) = 2 \delta(f)$; 
 
(2) $\mathrm{LT}(f^2) = \mathrm{LT}(g^2)$, so 
that $\delta(f) \leq \delta(A) < \delta(B) = \delta(C) = 2 \delta(f)$; 

(3) $\mathrm{LT}(f^2) = -\mathrm{LT}(g^2)$, so 
that $\delta(f) \leq \delta(C) < \delta(A) = \delta(B) = 2 \delta(f)$.  

We begin by showing that 
$x \geq \min \{ y,z \}$.
By way of contradiction, suppose $x < \min \{ y,z \}$. 
Consider the following cases.

\underline{Case 1.1:} 
Suppose $\mathrm{LT}(f^2) \neq \pm \mathrm{LT}(g^2)$.
Then $z \leq 2$ by relation (\ref{z-bound_noncoprime1}). 
So $x = 1$ and $z = 2$. 
If $y \leq z$, then we must have 
$y=z=2$ and $x = 1$. But Proposition \ref{two are two} 
implies that this is absurd. 
If $y > z =2$, then comparing 
degrees of both sides of the equation $wA + (wB)^y = (wC)^2$, 
we obtain  
\[ y (\delta(w) + \delta(f)) \leq y (\delta(w) + \delta(f) + \delta(g)) 
= 2 (\delta(w) + 2\delta(f)). \]
Since $w$ is nonconstant we see that 
$0 < (y-2)\delta(w) \leq (4-y)\delta(f)$.
This is absurd when $y \geq 4$. 
Thus, $y=3$. But this is also impossible 
by Lemma \ref{3-1-2}.

\underline{Case 1.2:} 
Suppose $\mathrm{LT}(f^2) = \mathrm{LT}(g^2)$.
Then relations (\ref{y-bound_noncoprime1}) and 
(\ref{z-bound_noncoprime1})  
imply that $y \leq 2$ and $z \leq 2$. Since $x < y,z$, 
we must have $y=z=2$ and $x=1$. This 
contradicts Proposition \ref{two are two}. 

\underline{Case 1.3:} 
Suppose $\mathrm{LT}(f^2) = -\mathrm{LT}(g^2)$. 
Relation (\ref{y-bound_noncoprime1}) implies 
that $y \leq 2$.  So we must have $y=2$ and $x=1$.
By hypothesis and Proposition \ref{two are two}, 
we have $z > 2$.
Comparing degrees of both sides of the equation
$wA + (wB)^2 = (wC)^z$,
we see that 
\[ 2( \delta(w) + 2\delta(f)) = 
z (\delta(w) + \delta(C)) 
\geq z (\delta(w) + \delta(f)). \] 
Since $w$ is nonconstant we see that
$0 < (z-2)\delta(w) \leq (4-z) \delta(f)$. 
This is absurd when $z \geq 4$.
Since $\delta(A)$ is even, 
Lemma \ref{1-2-3} shows that the case $z = 3$ 
is also impossible.

Therefore, $x \geq \min \{ y,z \}$.

Next, we prove that $y \geq \min \{ x,z \}$.
Assume that $y < \min \{ x,z \}$.
Consider the following cases:

\underline{Case 2.1:} 
Suppose $\mathrm{LT}(f^2) \neq \pm \mathrm{LT}(g^2)$.
Relations (\ref{x-bound_noncoprime2}) 
and (\ref{z-bound_noncoprime2}) 
imply that $x \leq 2$ and $z \leq 2$, 
respectively.   
Since $0 < y < x,z$, we must have $x= 2 =z$ 
and $y=1$. This is impossible by Proposition \ref{two are two}.

\underline{Case 2.2:} 
Suppose $\mathrm{LT}(f^2) = \mathrm{LT}(g^2)$. 
Then $z \leq 2$ by relation (\ref{z-bound_noncoprime2}). 
So $y=1$ and $z=2$.  
If $x \leq z$, then we must have $x=z=2$ and $y=1$. 
This contradicts Proposition \ref{two are two}.  
If $x > z = 2$ then 
comparing degrees of both sides of the 
equation $(wA)^x + wB = (wC)^2$, we obtain 
\[ x (\delta(w) + \delta(f)) 
\leq x (\delta(w) + \delta(A)) 
= 2 (\delta(w) + 2\delta(f)). \]
Since $w$ is nonconstant we see that 
$0 < (x-2)\delta(w) \leq (4-x)\delta(f)$.
Thus, $x=3$. But Lemma \ref{3-1-2} 
implies that this is impossible.

\underline{Case 2.3:} 
Suppose $\mathrm{LT}(f^2) = -\mathrm{LT}(g^2)$. 
Then $x \leq 2$ by relation (\ref{x-bound_noncoprime2}). 
So $y=1$ and $x=2$.  
If $z \leq x$, then we must have $x=z=2$ and $y=1$. 
This contradicts Proposition \ref{two are two}.  
If $z > x = 2$ then 
comparing degrees of both sides of the equation 
$(wA)^2 + wB = (wC)^z$, we obtain  
\[ 2 (\delta(w) + 2\delta(f)) = 
z (\delta(w) + \delta(C)) \geq 
z (\delta(w) + \delta(f)). \]
Since $w$ is nonconstant we see that 
$0 < (z-2)\delta(w) \leq (4-z)\delta(f)$.
Thus, $z=3$. As $\delta(B)$ is even, 
this is impossible by Lemma \ref{1-2-3}. 

Therefore, $y \geq \min \{ x,z \}$

Finally, we show that $z \geq \min \{ x,y \}$.
Assume that $z < \min \{ x,y \}$.
Consider the following cases:

\underline{Case 3.1:} 
Suppose $\mathrm{LT}(f^2) \neq \pm \mathrm{LT}(g^2)$.
Then $x \leq 2$ by relation (\ref{x-bound_noncoprime3}). 
So $z=1$ and $x=2$.  
If $y \leq x$, then we must have $x=y=2$ and $z=1$. 
This contradicts Proposition \ref{two are two}.  
If $y > x = 2$ then 
comparing degrees of both sides of the equation 
$(wB)^y = wC - (wA)^2$, we obtain  
\[ y (\delta(w) + \delta(f)) \leq y (\delta(w) + \delta(B)) 
= 2 (\delta(w) + 2\delta(f)). \]
Since $w$ is nonconstant we see that 
$0 < (y-2)\delta(w) \leq (4-y)\delta(f)$.
Thus, $y=3$. As $\delta(C)$ is even, 
this is impossible by Lemma \ref{1-2-3}.

\underline{Case 3.2:} 
Suppose $\mathrm{LT}(f^2) = \mathrm{LT}(g^2)$.
Then relation (\ref{y-bound_noncoprime3}) 
gives $y \leq 2$. So $z=1$ and $y=2$.  
If $x \leq y$, then we must have 
$x=y=2$ and $z=1$, contradicting Proposition \ref{two are two}. 
If $x > y = 2$ then 
comparing degrees of both sides of the equation 
$(wA)^x = wC - (wB)^2 $, we obtain  
\[ x (\delta(w) + \delta(f)) \leq x (\delta(w) + \delta(A)) 
= 2 (\delta(w) + 2\delta(f)). \]
Since $w$ is nonconstant we see that 
$0 < (x-2)\delta(w) \leq (4-x)\delta(f)$.
This is absurd when $x \geq 4$.
Thus, $x=3$. As $\delta(C)$ is even, 
this is impossible by Lemma \ref{1-2-3}.

\underline{Case 3.3:} Suppose $\mathrm{LT}(f^2) = -\mathrm{LT}(g^2)$. 
Relations (\ref{x-bound_noncoprime3}) 
and (\ref{y-bound_noncoprime3}) 
imply that $x \leq 2$ and $y \leq 2$, 
respectively.   
Since $0 < z < x$, we must have $x= 2 =y$ 
and $z=1$. This is impossible.

Therefore, $z \geq \min \{ x,y \}$. 
This finishes the proof of the Proposition.
\end{proof}

We list one more result which allows us 
to reduce the proof further into the 
simplest cases.

\begin{prop}\label{two equal exp1}
Let $(A,B,C) = (f^2 - g^2, 2fg, f^2 + g^2)$ 
be a primitive Pythagorean triple 
and $w$ be a nonzero polynomial. 
Let $m$ and $n$ be positive 
integers with $m \leq n$ such that 
\[ (wA)^n = (wC)^m - (wB)^m. \]
Then $m \leq 2$.
\end{prop}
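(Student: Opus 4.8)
The plan is to avoid the global degree bound of Theorem~\ref{Mason} and instead reduce the identity to one amenable to a purely local (root-by-root) multiplicity count. The reason for this choice is that the degree of $w$ is completely unconstrained, so applying Mason--Stothers to the coprime reformulation $B^m + w^{n-m}A^n = C^m$ only produces an inequality in which the term $\delta(w)$ cannot be eliminated, and no bound on $m$ drops out. First I would use $w \neq 0$ together with $m \le n$ to cancel $w^m$ from the identity $(wA)^n = w^m(C^m - B^m)$, obtaining
\[ w^{n-m} A^n = C^m - B^m. \]
I would then factor the left side through $A = (f-g)(f+g)$ and the right side through a primitive $m$-th root of unity $\zeta \in K$ (available since $K$ is algebraically closed of characteristic $0$), giving
\[ w^{n-m}(f-g)^n (f+g)^n = \prod_{j=0}^{m-1}\bigl(C - \zeta^j B\bigr). \]

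The heart of the argument is a comparison of multiplicities at a single well-chosen root. Since $f$ and $g$ are nonconstant and coprime, $f-g$ and $f+g$ are coprime and not both constant. Assuming first that $f-g$ is nonconstant, I would fix a root $t_0$ of multiplicity $r \ge 1$ in $f-g$ and multiplicity $s \ge 0$ in $w$; coprimality of $f$ and $g$ forces the common value $f(t_0)=g(t_0)$ to be nonzero. The key computation is that $(C - \zeta^j B)(t_0) = 2f(t_0)^2(1 - \zeta^j)$ vanishes \emph{only} for $j = 0$, where $C - B = (f-g)^2$. Hence $t_0$ has multiplicity exactly $2r$ on the right, while on the left it has multiplicity $(n-m)s + nr$ (as $t_0$ is not a root of $f+g$). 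Equating gives $(n-m)s + (n-2)r = 0$, and since $n \ge m$, $s \ge 0$ and $r \ge 1$, this forces $n \le 2$, whence $m \le 2$.

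Finally I would dispatch the degenerate case where $f-g$ is constant, so that $f+g$ is nonconstant, by repeating the computation at a root $t_1$ of $f+g$; there $(C - \zeta^j B)(t_1) = 2f(t_1)^2(1 + \zeta^j)$ vanishes iff $\zeta^j = -1$. I expect the one genuine subtlety to be a parity issue: if $m$ is odd no factor vanishes at $t_1$, so $t_1$ lies on the left of the identity but not on the right, a contradiction; thus $m$ must be even, $C + B = (f+g)^2$ supplies the unique vanishing factor, and the same equation $(n-m)s_1 + (n-2)r_1 = 0$ again yields $n \le 2$ and hence $m \le 2$. The only real obstacle is the initial recognition that the uncontrolled factor $w$ should be handled locally rather than through a degree count; once that is in place, each case is a short multiplicity comparison.
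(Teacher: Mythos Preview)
Your argument is correct. Both your proof and the paper's rest on the same two identities $C-B=(f-g)^2$ and $C+B=(f+g)^2$ together with the observation that the remaining factors $C-\zeta^{j}B$ (for $\zeta^{j}\ne\pm1$) are coprime to $A$; the difference is in how this observation is exploited. The paper argues globally: it rewrites $A^{n}=A^{n-2}(C-B)(C+B)$, cancels $C-B$ (and, when $m$ is even, $C+B$) from the factorisation $\prod_{j}(C-\zeta^{j}B)$, and then notes that the nonconstant polynomial $A^{n-2}$ would have to divide a product to which it is coprime, a contradiction for $m\ge3$. You instead localise at a single root of $f-g$ (or of $f+g$ in the degenerate case) and equate multiplicities, obtaining $(n-m)s+(n-2)r=0$ directly. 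Your route has two small advantages: the factor $w$ is absorbed automatically into the multiplicity count without any separate tracking, and you obtain the sharper conclusion $n\le2$ rather than merely $m\le2$; the paper's route, on the other hand, makes the role of unique factorisation in $K[t]$ more visible and avoids the need to split off the case where $f-g$ is constant.
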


\begin{proof}
Let $\zeta$ be a primitive $m$th root of unity. 
We have 
\[ w^{n-m}A^n = \prod_{j=1}^m (C - \zeta^j B), \]
where the factorization on the right consists 
of $m$ distinct pairwise coprime factors. 
Suppose $m \geq 3$. Then $n \geq 3$, and 
the above equation can be written as 
\[ w^{n-m}(C+B)A^{n-2} = \prod_{j=1}^{m-1} (C - \zeta^j B). \]
So $C + B$ divides $\prod_{j=1}^{m-1} (C - \zeta^j B)$. 

If $m$ is odd, then $C+B$ must be constant and $A$ 
divides the product. But any zero of $A$ 
is a zero of $C^2 - B^2$, while the product is 
coprime to $C^2 - B^2$. Contradiction. 

Suppose $m$ is even. 
Then we cancel $C+B$ out in the last equation to obtain 
\[ w^{n-m}A^{n-2} = \prod_{\substack{j=1 \\ j \neq m/2}}^{m-1} (C - \zeta^j B). \]
Thus $A$ divides 
$\prod_{\substack{j=1 \\ j \neq m/2}}^{m-1} (C - \zeta^j B)$. 
As in the previous case, this is impossible. 
This completes the proof. 
\end{proof}

We are now ready to finish the proof 
of our main result.

\begin{proof}[Proof of Theorem \ref{Jesma_poly}]
Let $w$ be a nonzero polynomial. 
The case where $w$ is constant has been settled 
by Proposition \ref{coprime-case}.
Suppose that $w$ is nonconstant and 
let $x,y,z$ be positive integers that satisfy 
equation (\ref{Pythagorean poly}). 
Proposition \ref{noncoprime_exponents are equal} implies $x= y = z$. 
Moreover, we have from Proposition \ref{two equal exp1} that $x,y,z \leq 2$. 
Suppose $x = y =z = 1$. Then $A = C - B$, or 
\[ f^2 - g^2 = (f - g)^2.  \]
This implies $f + g = f - g$ or $g =0$. 
Therefore, $x = y = z = 2$. 
The proof is complete.
\end{proof}

\end{document}